\theoremstyle{plain}
\newtheorem{thm}{Theorem}[section]
\newtheorem{cor}[thm]{Corollary}
\newtheorem{lem}[thm]{Lemma}
\newtheorem{Example}{Example}[section]
\newtheorem{note}{Note}[section]
\theoremstyle{definition}
\newtheorem{defn}{Definition}[section]
\newtheorem{rem}{Remark}[section]
\begin{document}

\setcounter {page}{1}
\title{  On $I$-statistical cluster point of double sequences}

\author[P. Malik AND A. Ghosh]{ Prasanta Malik* and Argha Ghosh* \ }
\newcommand{\acr}{\newline\indent}
\maketitle
\address{{*\,} Department of Mathematics, The University of Burdwan, Golapbag, Burdwan-713104,
West Bengal, India.
                Email: pmjupm@yahoo.co.in., buagbu@yahoo.co.in\acr
          \\}

\maketitle
\begin{abstract}In this paper we are concerned with the recent summability notion of 
$I$-statistically pre-Cauchy real double sequences in line of Das et.
al. [6] as a generalization of $I$-statistical convergence. Here we introduce the notion of double 
$I$-natural density and present some interesting properties of 
$I$-statistically pre-Cauchy double sequences of real numbers. Also in this paper we investigate the notion of $I$-statistical cluster point of double sequences in finite dimensional normed space.
\end{abstract}
\author{}
\maketitle
\textbf{Key words and phrases} : Double sequence, $I$-statistical convergence, $I$-statistical 
pre-Cauchy condition, double $I$-natural density, $I$-statistical cluster point.\\

\textbf {AMS subject classification (2010) :} 40A35, 40B05 \\

\section{\textbf{Introduction}}

The usual notion of convergence of real sequences was extended
to statistical convergence independently by Fast [7] and Schoenberg[14] 
based on the notion of natural density. Statistical 
convergence is one of the most active area of study
in the summability theory.

The concept of statistical convergence was further extended to 
$I$-convergence [12] using the notion of ideals of $\mathbb{N}$ 
and to $I$-statistical convergence [1] ( see also [4], [5], [8], [13] ). 

The concept of statistically pre-Cauchy sequences was introduced by 
Connor et. al. in [2]. They proved that every statistically convergent 
sequences are statistically pre-Cauchy but the converse holds 
under certain conditions. Recently Das et. al. in [6] introduced the 
notion of $I$-statistical pre-Cauchy condition for real sequences 
and established some basic properties of this notion and 
in [16] Yamanci et. al. studied the same notion for real double 
sequences. In [6] Das and Savas also introduced the notion 
of $I$-natural density. As a natural consequence in this paper we 
study some interesting properties of real $I$-statistically pre-Cauchy 
double sequences and introducing the notion of double 
$I$-natural density  we establish relationship between double $I$-natural density and $I$-statistical pre-Cauchy condition for double sequences of real numbers. Also we introduced the notion of $I$-statistical cluster point of double sequences in  finite dimensional normed space and prove some results analogous to the results of \cite{Ma}.\\

\section{\textbf{Basic Definitions and Notations  }}
Throughout the paper $\mathbb{N}$ denotes the set of all positive integers and $\mathbb{R}$ denotes the set of all real numbers.

\begin{defn}$[9]$   
Let $K\subset\mathbb{N}\times\mathbb{N}$ and $K(n,m)$ be the
number of $(j,k)\in K$ such that $j\leq n, k\leq m$. If the sequence
$\{\frac{K(n,m)}{nm}\}_{n,m\in\mathbb{N}}$ has a limit in
Pringsheim's sense (See [10]), then we say that $K$ has the double natural
density and it is denoted by
\begin{center}
$d_{2}(K)=\underset{n\rightarrow\infty}{\underset{m\rightarrow\infty}{\lim}}\frac{K(n,m)}{nm}$.
\end{center}
\end{defn}

\begin{defn} $[9]$ A double sequence $x=\{x_{jk}\}_{j,k\in\mathbb{N}}$ of real numbers is said to be statistically convergent to
$\xi \in \mathbb{R}$ if for every $\epsilon> 0$, we have
$d_{2}(A(\epsilon))=0$ where $A(\epsilon)=\{(j,k)\in
\mathbb{N}\times\mathbb{N}; |x_{jk}-\xi|\geq\epsilon\}$. In
this case we write
$st-\underset{k\rightarrow\infty}{\underset{j\rightarrow\infty}{\lim}}x_{jk}=\xi$.
\end{defn}

\begin{defn}$[9]$  A double sequence $x=\{x_{jk}\}_{j,k\in\mathbb{N}}$  of real numbers is said to be statistically
Cauchy if for every $\epsilon\geq 0$, there exist natural
numbers $N=N(\epsilon)$ and $M=M(\epsilon)$ such that for
all $j,p\geq N$ and $k,q\geq M$,
\begin{center}
$d_{2}(\{(j,k)\in\mathbb{N}\times\mathbb{N}: |x_{jk} - x_{pq}|\geq\epsilon\})=0$
\end{center}
\end{defn}

We now recall definitions of ideal and filter on a nonempty set.

\begin{defn}
 Let $X\neq\phi $. A class $ I $ of subsets of X is said to be
an ideal in X provided, $I$ satisfies the conditions:
\\(i)$\phi \in I$,
\\(ii)$ A,B \in I \Rightarrow A \cup B\in I,$
\\(iii)$ A \in I, B\subset A \Rightarrow B\in I$.
\end{defn}

An ideal $I$ in a non-empty set $X$ is called non-trivial if X $\notin I.$

\begin{defn}
 Let $X\neq\phi  $. A non-empty class $\mathbb F $ of subsets of X is
said to be a filter in X provided that:
\\(i)$\phi\notin \mathbb F $,
\\(ii) $A,B\in\mathbb F \Rightarrow A \cap B\in\mathbb F,$
\\(iii)$ A \in\mathbb F, B\supset A \Rightarrow B\in\mathbb F$.
\end{defn}
 
\begin{defn}
 Let $I$ be a non-trivial ideal in a non-empty set $ X$.
 Then the class $\mathbb F(I)$$ = \left\{M\subset X : \exists A \in I ~such~~ that ~M = X\setminus A\right\}$
is a filter on X. This filter $\mathbb F(I) $ is called the filter associated with $I$.
\end{defn}
 
A non-trivial ideal $I$ in $X(\neq \phi)$ is called admissible if $\left\{x\right\} \in I$ for each $x \in X$.

Throughout the paper we take $I$ as a non-trivial admissible ideal in $\mathbb{N} \times \mathbb{N} $. 

\begin{defn}$[12]$
A double sequence $x=\left\{x_{jk}\right\}_{j,k\in \mathbb{N}}$ of real numbers is said to converge to $\eta\in \mathbb{R}$ with respect to the ideal $I$, if for every $\epsilon > 0$ the set $A(\epsilon)=\left\{(m,n):\left|x_{mn}-\eta\right|\geq \epsilon\right\}\in I.$
\end{defn}

\begin{defn}$[1]$
 A double sequence $\{x_{jk}\}_{j,k\in \mathbb{N}}$ of real numbers is  $I$-statistically convergent to L, and we write $x_{jk}\stackrel{I^s}{\rightarrow}L$, provided that for $\epsilon> 0$ and $\delta> 0$
\begin{center}
$\left\{\left(m,n\right)\in\mathbb{N}\times\mathbb{N} :\frac{1}{mn}\left|\left\{\left(j,k\right):\left|x_{jk}-L\right|\geq\epsilon,j\leq m,k\leq n\right\}\right|\geq\delta\right\}\in I$
\end{center}
\end{defn}

Now for fixed $p_1,q_1,p_2,q_2\in\mathbb{N}$, we consider the ordered pairs $(j,k)_{p_1q_1}$ 
and $(j,k)_{p_2q_2}$ as different elements of $\mathbb{N}\times\mathbb{N}$ provided $(p_1,q_1)\neq (p_2,q_2)$.

\begin{defn}
 A double sequence $\{x_{jk}\}_{j,k\in \mathbb{N}}$ of real numbers is said to be $I$-statistically pre-Cauchy if for any $\epsilon> 0 $ and $\delta> 0$
\begin{center}
$\left\{(m,n)\in\mathbb{N}\times\mathbb{N}:\frac{1}{m^2n^2}\left|\left\{(j,k)_{pq}:\left|x_{jk}-x_{pq}\right|\geq\epsilon;j,p\leq m;k,q\leq n\right\}\right|\geq\delta\right\}\in I$.
\end{center}
\end{defn}

\begin{defn}$[11]$
 Let $x=\left\{x_{jk}\right\}_{j,k\in\mathbb N}$ be a double sequence of real numbers and let $u_n=sup\left\{x_{jk}:j,k\geq n\right\}, n \in \mathbb{N}$. Then Pringsheim limit superior of $x$ is defined as follows :\\ (i) if $u_n=+\infty$ for each n, then P-$limsup~ x =\infty$,\\ (ii) if $u_n<\infty $ for some n, then P-$limsup ~x=\underset{n}{inf} ~ u_n$.\\ Similarly, let $l_n=inf\left\{x_{jk}:j,k\geq n\right\}, n \in \mathbb{N} $. Then Pringsheim limit inferior of $x$ is defined as follows :\\ (i) if $l_n=-\infty$ for each n, then P-$liminf~ x =-\infty$,\\ (ii) if $l_n>-\infty $ for some n, then P-$liminf ~x= \underset{n}{sup}~ l_n$.
\end{defn}

\section{\textbf{Main Results  }}
First we present an interesting property of $I$-statistically pre-Cauchy  double sequences of real numbers in
line of Theorem 2.4 [6].
\begin{thm}
Let $x=\left\{x_{jk}\right\}_{j,k\in \mathbb N}$ be a double sequence of real numbers and $(\alpha,\beta)$ is an open interval such that $x_{jk}\notin(\alpha,\beta),$ for all $(j,k)\in \mathbb{N}\times\mathbb{N}$. We write $A=\left\{(j,k):x_{jk}\leq\alpha\right\}$ and $B=\left\{(j,k):x_{jk}\geq\beta\right\}$ and further assume that the following property is satisfied
\begin{center}
$P-limsup D_{mn}(A)~~ - ~~ P-liminf D_{mn}(A)<r$
\end{center}
for some $0\leq r\leq 1.$ If $x$ is $I$-statistically pre-Cauchy then either $I-\underset{n\rightarrow\infty}{\underset{m\rightarrow\infty}{\lim}}D_{mn}(A)=0$ or $I-\underset{n\rightarrow\infty}{\underset{m\rightarrow\infty}{\lim}}D_{mn}(B)=0$, where $D_{mn}(A)=\frac{1}{mn}\left|\left\{(j,k)\in A:j\leq m, k\leq n\right\}\right|$.
\end{thm}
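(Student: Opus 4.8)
The plan is to mirror the single–sequence argument behind Theorem 2.4 of [6]: turn the $I$-statistical pre-Cauchy hypothesis into an $I$-limit statement about the product $D_{mn}(A)\,D_{mn}(B)$, and then use the oscillation bound to force one of the two densities to vanish. First I would fix $\epsilon_0=\beta-\alpha>0$ and record the basic separation: whenever $(j,k)\in A$ and $(p,q)\in B$ (or vice versa) we have $x_{jk}\le\alpha$ and $x_{pq}\ge\beta$, so $|x_{jk}-x_{pq}|\ge\epsilon_0$. Writing $A_{mn}=mn\,D_{mn}(A)$ and $B_{mn}=mn\,D_{mn}(B)$ for the counting functions, the number of ordered pairs $(j,k)_{pq}$ with $j,p\le m$, $k,q\le n$ having one member in $A$ and the other in $B$ equals $2A_{mn}B_{mn}$, and each such pair satisfies $|x_{jk}-x_{pq}|\ge\epsilon_0$. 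Consequently
\[
\frac{1}{m^{2}n^{2}}\big|\{(j,k)_{pq}:|x_{jk}-x_{pq}|\ge\epsilon_0,\ j,p\le m,\ k,q\le n\}\big|\ \ge\ \frac{2A_{mn}B_{mn}}{m^{2}n^{2}}=2\,D_{mn}(A)\,D_{mn}(B).
\]

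Next, since $x_{jk}\notin(\alpha,\beta)$ for every $(j,k)$, the sets $A$ and $B$ partition $\mathbb{N}\times\mathbb{N}$, so $A_{mn}+B_{mn}=mn$ and hence $D_{mn}(A)+D_{mn}(B)=1$ for all $(m,n)$. Applying Definition 2.10 with $\epsilon=\epsilon_0$ and an arbitrary $\delta>0$, the set on which the left–hand side above is $\ge\delta$ lies in $I$; by the displayed inequality and the hereditary property of the ideal (a subset of a member of $I$ is again in $I$),
\[
\{(m,n):2\,D_{mn}(A)\,D_{mn}(B)\ge\delta\}\in I .
\]
As $\delta>0$ was arbitrary this says $I\text{-}\lim_{m,n}D_{mn}(A)\,D_{mn}(B)=0$. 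Putting $y_{mn}=D_{mn}(A)\in[0,1]$ and using $D_{mn}(B)=1-y_{mn}$ together with the elementary inequality $\min(t,1-t)\le 2t(1-t)$ on $[0,1]$, I would deduce $I\text{-}\lim_{m,n}\min(y_{mn},1-y_{mn})=0$; equivalently, for every $\delta\in(0,\tfrac12)$ the ``middle band'' $\{(m,n):\delta\le y_{mn}\le 1-\delta\}$ belongs to $I$.

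The decisive step is to convert this into the stated dichotomy using $P\text{-}\limsup D_{mn}(A)-P\text{-}\liminf D_{mn}(A)<r\le1$. I would argue by contradiction: if neither $I\text{-}\lim D_{mn}(A)=0$ nor $I\text{-}\lim D_{mn}(B)=0$ held, then there would be a fixed $\delta\in(0,\tfrac12)$ for which both the ``near $0$'' set $\{y_{mn}\le\delta\}$ and the ``near $1$'' set $\{y_{mn}\ge1-\delta\}$ fail to lie in $I$ (remove the middle band, which is in $I$, from the two $I$-positive sets supplied by the negation). Since $I$ is admissible, each such $I$-positive set is infinite, and I would use them to estimate the Pringsheim tails $u_N=\sup\{y_{mn}:m,n\ge N\}$ and $l_N=\inf\{y_{mn}:m,n\ge N\}$, forcing $P\text{-}\limsup y\ge1-\delta$ and $P\text{-}\liminf y\le\delta$, whence the oscillation is at least $1-2\delta$. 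Letting $\delta$ shrink (and, in the boundary case $r=1$, $\delta\to0$ to drive the gap to $1$) contradicts the hypothesis, so exactly one of $0,1$ survives as the $I$-limit of $y_{mn}$; that is, $I\text{-}\lim D_{mn}(A)=0$ or $I\text{-}\lim D_{mn}(B)=0$.

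I expect the main obstacle to be precisely this last passage. An $I$-positive (hence infinite) subset of $\mathbb{N}\times\mathbb{N}$ need not be cofinal in the Pringsheim tails $\{(j,k):j,k\ge N\}$, so some care—or an additional use of the structure of $D_{mn}$ as a density, e.g.\ its slow variation in each index—is needed to legitimately bound $u_N$ and $l_N$ from the near-$0$ and near-$1$ sets. Controlling this interplay between the ideal $I$ and the ordinary Pringsheim $\limsup/\liminf$ is where the real work lies; the earlier counting and the reduction to $I\text{-}\lim y_{mn}(1-y_{mn})=0$ are routine.
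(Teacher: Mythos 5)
Your proposal follows the paper's proof almost step for step: the same separation estimate converts the pre-Cauchy hypothesis into $I\text{-}\lim_{m,n} D_{mn}(A)D_{mn}(B)=0$ (the paper does this via its inequality (1), without your harmless factor $2$), and both arguments then try to turn the smallness of $D_{mn}(A)\left[1-D_{mn}(A)\right]$ plus the oscillation hypothesis into the stated dichotomy. Where you differ is only bookkeeping: your single parameter $\delta$, the inequality $\min(t,1-t)\le 2t(1-t)$, and the observation that the middle band $\left\{(m,n):\delta\le D_{mn}(A)\le 1-\delta\right\}$ lies in $I$ for every $\delta\in(0,\tfrac{1}{2})$ replace the paper's iterative scheme with thresholds $\tfrac{1}{25},\tfrac{1}{36},\dots$ (yielding the dichotomies $<\tfrac15$ or $>\tfrac45$, then $<\tfrac16$ or $>\tfrac56$, and oscillation bounds $\tfrac{k-2}{k}\rightarrow 1$). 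Your version of this reduction is cleaner and proves the same thing; your handling of the case $r=1$ by letting $\delta\rightarrow 0$ matches the paper's limiting step.

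The step you flag as the real difficulty --- passing from the two $I$-positive (hence infinite) sets $\left\{(m,n):D_{mn}(A)<\delta\right\}$ and $\left\{(m,n):D_{mn}(A)>1-\delta\right\}$ to the bound $P\text{-}\limsup D_{mn}(A)-P\text{-}\liminf D_{mn}(A)\ge 1-2\delta$ --- is indeed a gap in your write-up, but you should know that the paper does not close it either. The paper picks an increasing sequence in the dictionary order of $\mathbb{N}\times\mathbb{N}$ alternating between the two sets and asserts that ``clearly'' the Pringsheim oscillation is at least $\tfrac35$. An increasing sequence in dictionary order need not have both coordinates tending to infinity (it can live entirely in a strip such as $\{1\}\times\mathbb{N}$), and an infinite or $I$-positive set for a general admissible ideal need not meet every Pringsheim tail $\left\{(m,n):m,n\ge N\right\}$; since $P\text{-}\limsup$ and $P\text{-}\liminf$ are computed only from those tails, the asserted oscillation bound does not follow as written. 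Some additional input --- the slow variation of $(m,n)\mapsto D_{mn}(A)$ in each index that you mention, or a restriction on the ideal $I$ --- would be needed to repair it. So your proposal reproduces the paper's strategy, improves its middle portion, and its one acknowledged hole is precisely the unacknowledged hole in the paper's own proof; you have located the weak point correctly rather than missed an idea the paper supplies.
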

\begin{proof}
Here $B=\mathbb{N}\times\mathbb{N}\setminus A$ and so $D_{mn}(B)=1-D_{mn}(A)$ for all $(m,n)\in\mathbb{N}\times\mathbb{N}.$ To complete the proof it is sufficient to show that either $I-\underset{n\rightarrow\infty}{\underset{m\rightarrow\infty}{\lim}}D_{mn}(A)=0$ or 1. Note that
\begin{equation}
\chi_{A\times B}((j,k),(p,q))\leq \left|\left\{(j,k)_{pq}:\left|x_{jk}-x_{p,q}\right|\geq\beta-\alpha\right\}\right|.
\end{equation}
Since $x$ is $I$-statistically pre-Cauchy, so
\begin{center}
$I-\underset{n\rightarrow\infty}{\underset{m\rightarrow\infty}{\lim}}\frac{1}{m^2n^2}\left|\left\{(j,k):\left|x_{jk}-x_{p,q}\right|\geq\beta-\alpha;j,p\leq m;k,q\leq n\right\}\right|=0.$
\end{center}
But from (1) we get,
\begin{center}
$0=L.H.S= I-\underset{n\rightarrow\infty}{\underset{m\rightarrow\infty}{\lim}}D_{mn}(A)D_{mn}(B)=I-\underset{n\rightarrow\infty}{\underset{m\rightarrow\infty}{\lim}}D_{mn}(A)[1-D_{mn}(A)].$
\end{center}
Now from the definition of $I$-convergence it follows that
\begin{center}
$\left\{(m,n)\in\mathbb{N}\times\mathbb{N}:D_{mn}(A)[1-D_{mn}(A)]\geq\frac{1}{25}\right\}\in I.$
\end{center}
Then $\left\{(m,n)\in\mathbb{N}\times\mathbb{N}:D_{mn}(A)[1-D_{mn}(A)]<\frac{1}{25}\right\} = M (say) \in \mathbb{F}(I).$ Clearly for all $(m,n)\in M$ either $D_{mn}(A)<\frac{1}{5}$ or $D_{mn}(A)>\frac{4}{5}$. If $D_{mn}(A)<\frac{1}{5}$ for all $(m,n)\in M_1\subset M$ for some $M_1\in \mathbb{F}(I)$, then we have $I-\underset{n\rightarrow\infty}{\underset{m\rightarrow\infty}{\lim}}D_{mn}(A)=0$. For this, observe that, for given $\epsilon>0$, $0<\epsilon<\frac{1}{5}$, we have from the definition of $I$-convergence $\left\{(m,n)\in\mathbb{N}\times\mathbb{N}:D_{mn}(A)[1-D_{mn}(A)]<\epsilon^2\right\}=M_2(say)\in\mathbb{F}(I).$ Taking $M_0=M_1\cap M_2$, we see that $M_0\in \mathbb{F}(I)$ and $D_{mn}(A)<\epsilon$, for all $(m,n)\in M_0.$ Therefore
\begin{center}
$\left\{(m,n):D_{mn}(A)\geq\epsilon\right\}\subset (\mathbb{N}\times\mathbb{N}\setminus M_0).$
\end{center}
Since $(\mathbb{N}\times\mathbb{N}\setminus M_0)\in I$ so $\left\{(m,n):D_{mn}(A)\geq\epsilon\right\}\in I$ and hence  $I-\underset{n\rightarrow\infty}{\underset{m\rightarrow\infty}{\lim}}D_{mn}(A)=0$.
Similarly if $D_{mn}(A)>\frac{4}{5}$ for all $(m,n)\in M_3\subset M$ for some $M_3\in\mathbb{F}(I)$ then we get $I-\underset{n\rightarrow\infty}{\underset{m\rightarrow\infty}{\lim}}D_{mn}(A)=1.$
\\ If neither of above cases happen then considering dictionary order on $\mathbb{N}\times\mathbb{N}$, we can find an increasing sequence
\begin{center}
$\left\{(m_1,n_1)<(m_2,n_2)<.....\right\}$
\end{center}
from M such that
\begin{eqnarray*}
 D_{m_in_i} &<& \frac{1}{5} ~ when~ i~ is~ an~odd~ integer,\\
        &>& \frac{4}{5} ~ when~ i ~ is~ an ~ even ~integer.
\end{eqnarray*}
Then clearly
\begin{center}
 $P-limsup D_{mn}(A) ~~ - ~~ P-liminf D_{mn}(A)\geq \frac{3}{5}.$
 \end{center}
Again repeating the above process with $\left\{(m,n)\in\mathbb{N}\times\mathbb{N}:D_{mn}(A)[1-D_{mn}(A)]<\frac{1}{36}\right\}=M_4(say)\in \mathbb{F}(I)$ we get, either $I-\underset{n\rightarrow\infty}{\underset{m\rightarrow\infty}{\lim}}D_{mn}(A)=1$ or $I-\underset{n\rightarrow\infty}{\underset{m\rightarrow\infty}{\lim}}D_{mn}(A)=0$ or $P-limsup D_{mn}(A) ~~ - ~~ P-liminf D_{mn}(A)\geq \frac{4}{6}.$

If we continue the repetition of the above procedure, the we see that after a finite number of steps we get,  either  $I-\underset{n\rightarrow\infty}{\underset{m\rightarrow\infty}{\lim}}D_{mn}(A)=0$ or  $I-\underset{n\rightarrow\infty}{\underset{m\rightarrow\infty}{\lim}}D_{mn}(A)=1$. Because if not, then we have
\begin{center}
$P-limsup D_{mn}(A) ~~ - ~~ P-liminf D_{mn}(A)\geq \frac{k-2}{k}$, $k\in\mathbb N$ and $k>4$.
\end{center}
Consequently $P-limsup D_{mn}(A) ~~ - ~~ P-liminf D_{mn}(A)\geq 1 $, which contradicts our hypothesis. This completes the proof of the theorem.
\end{proof}

\begin{rem}
For $A\subset \mathbb{N}\times\mathbb{N}$ if $I-\underset{n\rightarrow\infty}{\underset{m\rightarrow\infty}{\lim}}\frac{1}{mn}\left|\left\{(j,k)\in A:j\leq m, k\leq n\right\}\right|$ exists we say that the double $I$-natural density of $A$ exists and we denote it by $d_I (A)$. Therefore the above result can be re-phrased as:\\
\textit{ Let $x=\left\{x_{jk}\right\}_{j,k\in \mathbb N}$ be a double sequence of real numbers and $(\alpha,\beta)$ is an open interval such that $x_{jk}\notin(\alpha,\beta),$ for all $(j,k)\in \mathbb{N}\times\mathbb{N}$. We write $A=\left\{(j,k):x_{jk}\leq\alpha\right\}$ and further assume that the following property is satisfied
\begin{center}
$P-limsup D_{mn}(A)$-$P-liminf D_{mn}(A)<r.$
\end{center}
for some $0\leq r\leq 1.$ If $x$ is $I$-statistically pre-Cauchy then either $d_I(A)=0$ or $d_I(A)=1.$}
\end{rem}

Before going to our next result, we introduce the following definition.
\begin{defn}
A real number $\xi$ is said to be an $I$-statistical cluster point of a double sequence $x=\left\{x_{jk}\right\}_{j,k\in \mathbb N}$ of real numbers if for any $\epsilon>0$
\begin{center}
$d_I(\left\{(j,k):\left|x_{jk}-\xi\right|<\epsilon\right\})\neq 0.$
\end{center}
\end{defn}
\begin{defn}
A double sequence $ x=\left\{x_{jk}\right\}_{j,k\in\mathbb N}$ is said to be $ I $- statistical bounded if there exists a positive number $ T $ such that for any $ \delta > 0 $ the set $ A = \{(m,n)\in\mathbb{N\times N}: \frac{1}{mn}|\{ (j,k):j\leq m;k \leq n,\|x_{jk}\| \geq T \}|  \geq \delta\}\in I $.
\end{defn}
\begin{lem}
Let $A\subset \mathbb{R}^n$ be a compact set and $A\cap {\Lambda}_x^S(I)=\emptyset$. Then the set $\left\{(j,k)\in\mathbb N\times\mathbb N:x_{jk}\in A\right\}$ has $I$-asymptotic density zero.
\end{lem}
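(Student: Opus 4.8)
The plan is to combine a compactness argument with the stability of $I$-natural density zero under finite unions, in the spirit of the corresponding statistical-cluster-point result. Throughout, recall that $d_I(E)=0$ means precisely that the counting densities $D_{mn}(E)$ tend to $0$ in the sense of $I$-convergence.

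First I would use the hypothesis $A\cap\Lambda_x^S(I)=\emptyset$ pointwise. Every $\xi\in A$ fails to be an $I$-statistical cluster point of $x$, so negating the defining condition (reading ``$d_I\neq 0$'' as ``the $I$-density exists and is positive'') furnishes, for each such $\xi$, a radius $\epsilon_\xi>0$ with
\[
d_I\big(\{(j,k):\|x_{jk}-\xi\|<\epsilon_\xi\}\big)=0 .
\]
The open balls $B(\xi,\epsilon_\xi)$, $\xi\in A$, then form an open cover of $A$. Since $A\subset\mathbb{R}^n$ is compact, I would extract a finite subcover $B(\xi_1,\epsilon_1),\dots,B(\xi_p,\epsilon_p)$ with $A\subseteq\bigcup_{i=1}^p B(\xi_i,\epsilon_i)$. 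Passing to index sets gives the inclusion
\[
\{(j,k):x_{jk}\in A\}\subseteq\bigcup_{i=1}^p\{(j,k):x_{jk}\in B(\xi_i,\epsilon_i)\}=:\bigcup_{i=1}^p A_i ,
\]
where each $A_i$ satisfies $d_I(A_i)=0$ by the previous step.

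Finally I would transport this to densities. Subadditivity of the counting density yields, for every $(m,n)$, the chain $0\le D_{mn}\big(\{x_{jk}\in A\}\big)\le D_{mn}\big(\bigcup_{i=1}^p A_i\big)\le\sum_{i=1}^p D_{mn}(A_i)$. The crux is then to show that the right-hand side is $I$-convergent to $0$; this is where the ideal structure enters. Given $\delta>0$, for each $i$ the set $F_i=\{(m,n):D_{mn}(A_i)<\delta/p\}$ lies in the filter $\mathbb{F}(I)$, so $F=\bigcap_{i=1}^p F_i\in\mathbb{F}(I)$ because filters are closed under finite intersections. On $F$ one has $\sum_{i=1}^p D_{mn}(A_i)<\delta$, whence $\{(m,n):D_{mn}(\bigcup A_i)\ge\delta\}\subseteq(\mathbb{N}\times\mathbb{N})\setminus F\in I$. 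Thus $I\text{-}\lim_{m,n\to\infty}D_{mn}(\bigcup_{i=1}^p A_i)=0$, and the squeeze forces $d_I(\{(j,k):x_{jk}\in A\})=0$, as required. I expect the main obstacle to be precisely this last filter bookkeeping: one must treat the finitely many distinct thresholds $\epsilon_i$ uniformly and invoke closure of $I$ under finite unions (equivalently, of $\mathbb{F}(I)$ under finite intersections), since $I$-convergence does not allow the naive ``limit of a finite sum is the sum of limits'' to be used without this care.
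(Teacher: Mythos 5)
Your proposal is correct and follows essentially the same route as the paper's proof: negate the cluster-point condition at each $\xi\in A$, use compactness to extract a finite subcover of density-zero balls, and bound the counting density of $\{(j,k):x_{jk}\in A\}$ by the finite sum of the counting densities of the balls. The only difference is cosmetic: where the paper simply invokes ``the property of $I$-convergence'' to pass the $I$-limit through the finite sum, you spell out the underlying filter argument (intersecting the finitely many sets $F_i\in\mathbb{F}(I)$ with thresholds $\delta/p$), which is exactly the detail the paper leaves implicit.
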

\begin{proof}
Since $A\cap {\Lambda}_x^S(I)=\emptyset$ so for any $\xi\in A$ there is a positive number $\varepsilon=\varepsilon(\xi)$ such that
\begin{center}
 $d_I(\left\{(j,k):\left\|x_{jk}-\xi\right\|<\varepsilon\right\})$.
\end{center}
Let $B_\varepsilon(\xi)=\left\{y\in\mathbb {R}^n:\left\|y-\xi\right\|<\varepsilon\right\}$. Then the set of open sets $\left\{B_\varepsilon(\xi):\xi\in A\right\}$ form an open covers of $A$. Since $A$ is a compact set so there is a finite subcover of $A$, say $\left\{A_i=B_{\varepsilon_i}(\xi_i):i=1,2,..q\right\}$. Then $A\subset\bigcup\limits_{i=1}^{q} A_i$ and 
\begin{center}
$ d_I(\left\{(j,k):\left\|x_{jk}-\xi_i\right\|<\varepsilon_i\right\})=0$ for $i=1,2,...q$.
\end{center}
We can write 
\begin{center}
$\left|\left\{(j,k):j\leq m,~k\leq n; x_{jk}\in A\right\}\right|\leq\sum\limits_{i=1}^{q}\left|\left\{(j,k):j\leq m,~k\leq n; \left\|x_{jk-\xi_i}\right\|<\varepsilon_i\right\}\right|$,
\end{center}
and by the property of $I$-convergence, $
 I \mbox{-}\underset{m,n \rightarrow \infty}{\lim}\left|\left\{(j,k):j\leq m,~k\leq n; x_{jk}\in A\right\}\right|
\leq
\sum\limits_{i=1}^{q}  I \mbox{-}\underset{m,n \rightarrow \infty}{\lim}\left|\left\{(j,k):j\leq m,~k\leq n; \left\|x_{jk-\xi_i}\right\|<\varepsilon_i\right\}\right|=0.$\\
Which gives $d_I(\left\{(j,k):x_{jk}\in A\right\})=0$ and this completes the proof.
\end{proof}
\begin{note}
If the set $A$ is not compact then the above result may not be true. To show this we cite the following example.
\end{note}
\begin{Example}
Let us consider the double sequence $ x = \{ x_{jk}\}_{j,k \in \mathbb{N}}$ in $\mathbb{R}$ defined by
\[ x_{jk} = \left\{
  \begin{array}{l l}
    0, & \quad \text{otherwise }\\
    k, & \quad \text{if k is even}.
  \end{array} \right.\]\\

Then ${\Lambda}_x^S(I)=\left\{0\right\}$. Now if we take $A=[1,\infty)$, then $A\cap{\Lambda}_x^S(I)=\emptyset$, but $d_I(\{(j,k):x_{jk}\in A\})=\frac{1}{2}\neq 0.$
\end{Example}
\begin{thm}
If a double sequence $ x=\left\{x_{jk}\right\}_{j,k\in\mathbb N}\in \mathbb {R}^n$ has a bounded ideal non-thin subsequence, then the set ${\Lambda}_x^S(I)$ is a non-empty closed set. 
\end{thm}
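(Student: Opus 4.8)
The plan is to prove the two halves of the statement separately. For non-emptiness I would read the hypothesis as furnishing an index set $K\subset\mathbb N\times\mathbb N$ with $d_I(K)\neq 0$ for which the subsequence $\{x_{jk}:(j,k)\in K\}$ is bounded, hence contained in some closed ball $\bar B\subset\mathbb R^n$. Being closed and bounded, $\bar B$ is compact, and since $K\subset\{(j,k):x_{jk}\in\bar B\}$ the larger set is again non-thin, i.e. its $I$-asymptotic density is not $0$. The preceding Lemma asserts that a compact set disjoint from ${\Lambda}_x^S(I)$ must have an index set of density zero; applying its contrapositive with the compact set $\bar B$ yields $\bar B\cap{\Lambda}_x^S(I)\neq\emptyset$, so in particular ${\Lambda}_x^S(I)\neq\emptyset$.

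For closedness I would verify that the complement is open. Assume $\xi\notin{\Lambda}_x^S(I)$; then there is an $\epsilon>0$ with $d_I(\{(j,k):\|x_{jk}-\xi\|<\epsilon\})=0$. I claim that no point of the open ball $B_{\epsilon/2}(\xi)$ lies in ${\Lambda}_x^S(I)$. Indeed, for $\eta$ with $\|\eta-\xi\|<\epsilon/2$ the triangle inequality gives the inclusion $\{(j,k):\|x_{jk}-\eta\|<\epsilon/2\}\subset\{(j,k):\|x_{jk}-\xi\|<\epsilon\}$, and monotonicity of $D_{mn}$ under inclusion forces the density of the smaller set to be $0$ as well, so $\eta\notin{\Lambda}_x^S(I)$. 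Hence the complement of ${\Lambda}_x^S(I)$ is open and the set is closed.

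The only routine ingredient used twice above is the monotonicity principle for the $I$-density: if $P\subset Q$ and $d_I(Q)=0$ then $d_I(P)=0$, which follows at once from $\{(m,n):D_{mn}(P)\ge\delta\}\subset\{(m,n):D_{mn}(Q)\ge\delta\}\in I$. I would state this once before the proof. The genuine obstacle is the non-emptiness step, and the key point is that the preceding Lemma already carries it: without that result one would be forced into a nested-box bisection of $\bar B$, repeatedly halving and retaining a sub-box whose preimage under $x$ remains non-thin, and then extracting the cluster point as the unique common point of the nested boxes. Because the Lemma is available, the contrapositive argument replaces that construction entirely, and the proof reduces to the two short steps above.
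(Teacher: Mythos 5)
Your proposal is correct and follows essentially the same route as the paper: non-emptiness via Lemma 3.8 applied to a compact set containing the bounded non-thin subsequence (you phrase it as a contrapositive where the paper argues by contradiction, and you rightly note the index-set inclusion plus density monotonicity that the paper uses implicitly), and closedness via the ball-inclusion argument (you show the complement is open where the paper shows limit points belong to the set — the same inclusion $\{(j,k):\|x_{jk}-\eta\|<\epsilon'\}\subset\{(j,k):\|x_{jk}-\xi\|<\epsilon\}$ drives both). The differences are purely presentational, so nothing further is needed.
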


\begin{proof}
Let $ x=\left\{x_{j_pk_q}\right\}_{p,q\in \mathbb N}$ is a bounded ideal non-thin subsequence of $x$, and there is a compact set $A$ such that $x_{jk}\in A$ for each $(j,k)\in P $ where $P=\left\{(j_p,k_q):p,q\in\mathbb N\right\}$. Clearly $d_I(P)=0$. Now if ${\Lambda}_x^S(I)=\emptyset$, then $A\cap{\Lambda}_x^S(I)=\emptyset$ and so by lemma 3.8 we have 
\begin{center}
$d_I(\{(j,k):x_{jk}\in A\})=0$.
\end{center}
But 
\begin{center}
$\left|\left\{(j,k):j\leq m,~ k\leq n,~ (j,k)\in P\right\}\right|\leq \left|\left\{(j,k):j\leq m,~k\leq n,~x_{jk}\in A\right\}\right|$,
\end{center}
which implies that $d_I(P)=0$. This is a contradiction, so ${\Lambda}_x^S(I)\neq \emptyset$.

 Now to show ${\Lambda}_x^S(I)$ is closed, let $\xi$ be a limit point of ${\Lambda}_x^S(I)$. Then for every $\varepsilon>0$ we have $B_\varepsilon(\xi)\cap {\Lambda}_x^S(I)\neq\emptyset$. Let $\beta \in B_\varepsilon(\xi)\cap {\Lambda}_x^S(I)$. Now we can choose $\epsilon'>0$ such that $B_{\epsilon'}(\beta)\subset B_\varepsilon(\xi).$ Since $\beta\in {\Lambda}_x^S(I)$ so 
 \begin{center}
 $d_I(\{(j,k):\left\|x_{jk}-\beta\right\|<\epsilon'\})\neq\emptyset$
 \end{center}

\begin{center}
$\Rightarrow d_I(\{(j,k):\left\|x_{jk}-\xi\right\|<\varepsilon\})\neq\emptyset$.
\end{center}
Hence $\xi\in{\Lambda}_x^S(I)$.
\end{proof}
\begin{defn}
A double sequence $ x=\left\{x_{jk}\right\}_{j,k\in\mathbb N}\in\mathbb {R}^n$ is said to be $ I $- statistically bounded if there exists a compact set $C$ such that for any $ \delta > 0 $ the set $ A = \{(m,n)\in\mathbb{N\times N}: \frac{1}{mn}|\{ (j,k):j\leq m;k \leq n, x_{jk}\notin C \} \geq \delta\}|\in I $.
\end{defn}
\begin{note}
The above Definition 3.4  is compatible with the Definition 3.1 for a double sequence $ x=\left\{x_{jk}\right\}_{j,k\in\mathbb N}\in\mathbb {R}^n$.
\end{note}
\begin{cor}
If $ x=\left\{x_{jk}\right\}_{j,k\in\mathbb N}\in\mathbb {R}^n$ is $I$-statistically bounded. then the set ${\Lambda}_x^S(I)$ is non empty and compact.
\end{cor}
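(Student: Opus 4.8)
The plan is to separate the two assertions. Nonemptiness will follow from Theorem 3.3 once I exhibit a bounded ideal non-thin subsequence, while compactness will follow by establishing the inclusion $\Lambda_x^S(I)\subseteq C$, where $C$ is the compact set supplied by the hypothesis of $I$-statistical boundedness; since $\Lambda_x^S(I)$ is closed by Theorem 3.3, a closed subset of the compact set $C$ is automatically compact.

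To produce the subsequence, first I would set $P=\{(j,k):x_{jk}\in C\}$ and $Q=\mathbb{N}\times\mathbb{N}\setminus P$. The definition of $I$-statistical boundedness says exactly that for every $\delta>0$ the set $\{(m,n):D_{mn}(Q)\geq\delta\}$ belongs to $I$, i.e. $d_I(Q)=0$. Because $D_{mn}(P)+D_{mn}(Q)=1$ for every $(m,n)$, this gives $I\text{-}\lim_{m,n}D_{mn}(P)=1$, so $d_I(P)=1\neq 0$ and $P$ is non-thin. As every term $x_{jk}$ with $(j,k)\in P$ lies in the compact (hence bounded) set $C$, the sequence $x$ has a bounded ideal non-thin subsequence, namely the one indexed by $P$; I would note here that the proof of Theorem 3.3 uses only the inclusion $P\subseteq\{(j,k):x_{jk}\in C\}$ together with the non-thinness of $P$, so this index set qualifies. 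Theorem 3.3 then yields that $\Lambda_x^S(I)$ is nonempty and closed.

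For the inclusion $\Lambda_x^S(I)\subseteq C$, I would take any $\xi\notin C$. Since $C$ is compact and therefore closed, $\rho:=\mathrm{dist}(\xi,C)>0$; choosing $0<\epsilon<\rho$ gives $B_\epsilon(\xi)\cap C=\emptyset$, whence $\{(j,k):\|x_{jk}-\xi\|<\epsilon\}\subseteq Q$. By monotonicity of the density together with $d_I(Q)=0$, it follows that $d_I(\{(j,k):\|x_{jk}-\xi\|<\epsilon\})=0$, so $\xi$ fails the defining condition of an $I$-statistical cluster point and $\xi\notin\Lambda_x^S(I)$. Hence $\Lambda_x^S(I)\subseteq C$, and being a closed subset of the compact set $C$ it is compact.

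The argument is short, and the only genuinely delicate step is the correct reading of the definition of $I$-statistical boundedness as the assertion $d_I(Q)=0$, followed by the passage to $d_I(P)=1$ via the complementary identity $D_{mn}(P)+D_{mn}(Q)=1$; once this is secured, nonemptiness is immediate from Theorem 3.3 and compactness from the containment in $C$. The monotonicity of the $I$-density used in the last paragraph is the same order property of $I$-limits already exploited in the proof of the compact-set Lemma, so it requires no separate justification.
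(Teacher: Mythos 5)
Your proposal is correct and follows essentially the same route as the paper: you extract an ideal non-thin subsequence lying in the compact set $C$ from $d_I(\{(j,k):x_{jk}\in C\})=1$, invoke the theorem on bounded ideal non-thin subsequences to get that $\Lambda_x^S(I)$ is nonempty and closed, and then prove $\Lambda_x^S(I)\subseteq C$ via the density-zero argument for points outside $C$. The only differences are cosmetic — you argue the inclusion contrapositively using $\mathrm{dist}(\xi,C)>0$ where the paper argues by contradiction, and you spell out the monotonicity of $d_I$ that the paper leaves implicit.
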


\begin{proof}
Let $C$ be a compact set such that $d_I(\{(j,k):x_{jk}\notin C\})=0.$ Then $d_I(\{(j,k):x_{jk}\in C\})=1$, which implies that $C$ contains a ideal non-thin subsequence of $x$. Hence by Theorem 3.9, ${\Lambda}_x^S(I)$ is nonempty and closed.

 Now to prove that ${\Lambda}_x^S(I)$ is compact it is sufficient to prove that ${\Lambda}_x^S(I)\subset C$. If possible let us assume that $\xi\in {\Lambda}_x^S(I)$ but $\xi\notin C$. Since $C$ is compact, so there exists $\varepsilon>0$ such that $B_\varepsilon(\xi)\cap C=\emptyset$. In this case we have 
 \begin{center}
$\left|\left\{(j,k):\left\|x_{jk}-\xi\right\|\right\}\right|\subset\left|\left\{(j,k):x_{jk}\notin C\right\}\right|$. 
 \end{center}
Therefore $d_I(\left\{(j,k):\left\|x_{jk}-\xi\right\|\right\})=0$, which contradicts the fact that $\xi\in {\Lambda}_x^S(I)$. Therefore ${\Lambda}_x^S(I)\subset C$.
\end{proof}
\begin{thm}
 Let $ x=\left\{x_{jk}\right\}_{j,k\in\mathbb N}\in\mathbb {R}^n$ be $I$-statistically bounded double sequence. Then for every $\varepsilon>0$ the set 
 \begin{center}
$\left\{(j,k):d({\Lambda}_x^S(I),x_{jk})\geq\varepsilon\right\}$ 
 \end{center}
has $I$-asymptotic density zero. Where $d({\Lambda}_x^S(I),x_{jk})=inf_{y\in{\Lambda}_x^S(I)}\left\|y-x_{jk}\right\|$ the distance from $x_{jk}$ to the set ${\Lambda}_x^S(I)$.
\end{thm}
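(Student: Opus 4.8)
The plan is to reduce the claim to a single application of Lemma 3.8, using the fact established in the Corollary above that $\Lambda_x^S(I)$ is a non-empty compact subset of $\mathbb{R}^n$. Non-emptiness is what makes the distance function $d(\Lambda_x^S(I),\cdot)$ meaningful, while compactness (hence boundedness and closedness) of $\Lambda_x^S(I)$ is what will let me manufacture the compact set required to invoke Lemma 3.8.

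Fix $\varepsilon>0$ and put $E=\{y\in\mathbb{R}^n:d(\Lambda_x^S(I),y)\geq\varepsilon\}$, so that the set in the statement is exactly $\{(j,k):x_{jk}\in E\}$. Since $y\mapsto d(\Lambda_x^S(I),y)$ is continuous, $E$ is closed. Because $x$ is $I$-statistically bounded, I would choose a compact set $C$ with $d_I(\{(j,k):x_{jk}\notin C\})=0$, and then set $A=E\cap C$. As the intersection of a closed set with a compact set, $A$ is compact; and since every $\xi\in\Lambda_x^S(I)$ satisfies $d(\Lambda_x^S(I),\xi)=0<\varepsilon$, we have $\Lambda_x^S(I)\cap E=\emptyset$, whence $A\cap\Lambda_x^S(I)=\emptyset$. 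Lemma 3.8 then applies to $A$ and yields $d_I(\{(j,k):x_{jk}\in A\})=0$.

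The next step is the splitting
\[
\{(j,k):x_{jk}\in E\}\subseteq\{(j,k):x_{jk}\in A\}\cup\{(j,k):x_{jk}\notin C\},
\]
which holds because any index with $x_{jk}\in E$ either has $x_{jk}\in C$, and hence $x_{jk}\in E\cap C=A$, or else has $x_{jk}\notin C$. Writing $D_{mn}(S)=\frac{1}{mn}|\{(j,k)\in S:j\leq m,k\leq n\}|$ for a set $S$ of indices, this inclusion gives $D_{mn}(\{x_{jk}\in E\})\leq D_{mn}(\{x_{jk}\in A\})+D_{mn}(\{x_{jk}\notin C\})$ for every $(m,n)$. Since both sequences on the right are $I$-convergent to $0$, their sum is $I$-convergent to $0$ by additivity of $I$-limits, and the squeeze $0\leq D_{mn}(\{x_{jk}\in E\})\leq D_{mn}(\{x_{jk}\in A\})+D_{mn}(\{x_{jk}\notin C\})$ forces $I\text{-}\underset{m,n\to\infty}{\lim}\,D_{mn}(\{x_{jk}\in E\})=0$; that is, the set has $I$-asymptotic density zero, as required.

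The routine ingredients are the continuity of the distance function and the sub-additivity of the $I$-asymptotic density, both of which follow from standard facts about $I$-convergence. The one point to handle with care is the construction of the auxiliary set $A=E\cap C$: it is essential that $A$ be \emph{compact} (which uses the compactness of $C$ furnished by $I$-statistical boundedness) and \emph{disjoint} from $\Lambda_x^S(I)$, since these are precisely the hypotheses of Lemma 3.8. Dropping the boundedness hypothesis removes the confining compact set $C$, and then the indices with $x_{jk}\in E$ may carry positive density even though $E$ misses $\Lambda_x^S(I)$; this is exactly the phenomenon exhibited in the Example above.
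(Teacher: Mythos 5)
Your proof is correct and takes essentially the same route as the paper: your auxiliary compact set $A=E\cap C$ is exactly the paper's $A=C\setminus B_{\varepsilon}(\Lambda_x^S(I))$, and both arguments hinge on applying Lemma 3.8 to this set after securing $C$ from $I$-statistical boundedness and the non-emptiness of $\Lambda_x^S(I)$ from the preceding Corollary. The only difference is presentational: the paper phrases it as a contradiction (a nonzero density would force $A$ to contain an ideal non-thin subsequence, so Lemma 3.8 would give $A\cap\Lambda_x^S(I)\neq\emptyset$), while you apply the lemma directly and spell out the splitting $\{x_{jk}\in E\}\subseteq\{x_{jk}\in A\}\cup\{x_{jk}\notin C\}$ and the subadditivity of $I$-limits, a step the paper leaves implicit.
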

\begin{proof}
Let $C$ be a compact set such that $d_I(\left\{(j,k):x_{jk}\notin C\right\})=0$. Then by Corollary 3.10 we have ${\Lambda}_x^S(I)$ is non-empty and ${\Lambda}_x^S(I)\subset C$.

Now if possible let $d_I(\left\{(j,k):d({\Lambda}_x^S(I),x_{jk})\geq\varepsilon'\right\})\neq 0$ for some $\varepsilon'$.

 Now we define $B_{\varepsilon'}({\Lambda}_x^S(I))=\left\{y\in\mathbb {R}^n:d({\Lambda}_x^S(I),y)<\varepsilon'\right\}$ and let $A=C\setminus B_{\varepsilon'}({\Lambda}_x^S(I))$. Then $A$ is a compact set which contains a ideal non-thin subsequence of $x$. Then by Lemma 3.8 $A\cap {\Lambda}_x^S(I)\neq\emptyset$ this is a contradiction. Hence 
 \begin{center}
\begin{center}
$d_I(\left\{(j,k):d({\Lambda}_x^S(I),x_{jk})\geq\varepsilon\right\})=0.$ 
 \end{center}
 \end{center}

\end{proof}

For the next result we assume that $I$ is such an ideal and $x$ is such that the above result holds without any
additional assumption i.e;\\
\textit{ $(**)$ If $x=\left\{x_{jk}\right\}_{j,k\in\mathbb N}$ is $I$-statistically pre-Cauchy double sequence of real numbers and $x_{jk}\notin(\alpha,\beta)$ for all $(j,k)\in\mathbb{N}\times\mathbb{N}$, where $(\alpha,\beta)$ is an open interval in $\mathbb{R}$, then either $d_I(\left\{(j,k): x_{jk}\leq\alpha\right\})=0$ or $d_I(\left\{(j,k): x_{jk}\geq\beta\right\})=0$}.

\begin{thm}
Let $x=\left\{x_{jk}\right\}$ be an $I$-statistically pre-Cauchy double sequence of real numbers. If the set of limit points of $x$ is no-where dense and $x$ has a $I$-statistical cluster point. Then $x$ is $I$-statistically convergent under the hypothesis $(**)$.
\end{thm}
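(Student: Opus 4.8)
The plan is to show that the given $I$-statistical cluster point $\xi$ is in fact the $I$-statistical limit of $x$. The key observation is that $x$ is $I$-statistically convergent to $\xi$ if and only if $d_I(\{(j,k):|x_{jk}-\xi|\geq\epsilon\})=0$ for every $\epsilon>0$, since this density-zero condition is literally the defining condition of $I$-statistical convergence. So I would argue by contradiction: suppose there is $\epsilon_0>0$ with $d_I(\{(j,k):|x_{jk}-\xi|\geq\epsilon_0\})\neq 0$. Writing this set as the disjoint union of $S^{+}=\{(j,k):x_{jk}\geq\xi+\epsilon_0\}$ and $S^{-}=\{(j,k):x_{jk}\leq\xi-\epsilon_0\}$ and using that the $I$-limit is additive (so $d_I(S^{+})=d_I(S^{-})=0$ would force $d_I(S^{+}\cup S^{-})=0$), at least one of $d_I(S^{+})$, $d_I(S^{-})$ fails to be $0$; assume without loss of generality that $d_I(S^{+})\neq 0$.

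Next I would exploit the hypothesis that the set $L_x$ of limit points of $x$ is nowhere dense. Since $L_x$ is closed with empty interior, the open interval $(\xi,\xi+\epsilon_0)$ cannot be contained in $L_x$, so there is a subinterval $(\alpha,\beta)\subseteq(\xi,\xi+\epsilon_0)$ with $(\alpha,\beta)\cap L_x=\emptyset$. Choosing $\alpha<\alpha'<\beta'<\beta$, the compactness of $[\alpha',\beta']$ together with $(\alpha,\beta)\cap L_x=\emptyset$ forces $F=\{(j,k):x_{jk}\in(\alpha',\beta')\}$ to be finite, for otherwise the corresponding values would accumulate to a limit point lying inside $(\alpha,\beta)$. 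I would then define an auxiliary sequence $y$ by $y_{jk}=x_{jk}$ for $(j,k)\notin F$ and $y_{jk}=\beta'$ for $(j,k)\in F$, so that $y_{jk}\notin(\alpha',\beta')$ for every $(j,k)$. Because $y$ and $x$ differ in only finitely many positions, for each fixed $(m,n)$ at most $2|F|mn$ of the pairs $((j,k),(p,q))$ with $j,p\leq m$, $k,q\leq n$ involve an index of $F$; hence the pre-Cauchy average $\frac{1}{m^2n^2}|\{(j,k)_{pq}:|x_{jk}-x_{pq}|\geq\eta\}|$ and its analogue for $y$ differ by at most $2|F|/(mn)\to 0$, so $y$ is again $I$-statistically pre-Cauchy. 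For the same reason the sets $\{y_{jk}\leq\alpha'\}$ and $\{x_{jk}\leq\alpha'\}$ coincide, while $\{y_{jk}\geq\beta'\}$ and $\{x_{jk}\geq\beta'\}$ differ only by the finite set $F$, so their $I$-densities agree.

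Now I would apply the hypothesis $(**)$ to the pre-Cauchy sequence $y$ and the interval $(\alpha',\beta')$, which it avoids everywhere: either $d_I(\{y_{jk}\leq\alpha'\})=0$ or $d_I(\{y_{jk}\geq\beta'\})=0$. Since $\alpha'>\xi$, the inclusion $\{(j,k):|x_{jk}-\xi|<\alpha'-\xi\}\subseteq\{(j,k):x_{jk}\leq\alpha'\}$ together with the cluster-point property $d_I(\{(j,k):|x_{jk}-\xi|<\alpha'-\xi\})\neq 0$ shows $d_I(\{x_{jk}\leq\alpha'\})\neq 0$, hence $d_I(\{y_{jk}\leq\alpha'\})\neq 0$. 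The dichotomy therefore forces $d_I(\{y_{jk}\geq\beta'\})=0$, so $d_I(\{x_{jk}\geq\beta'\})=0$. But $\beta'<\xi+\epsilon_0$ gives $S^{+}\subseteq\{(j,k):x_{jk}\geq\beta'\}$, whence $d_I(S^{+})=0$, contradicting our choice. The symmetric argument, using an interval inside $(\xi-\epsilon_0,\xi)$, disposes of the case $d_I(S^{-})\neq 0$. Thus no such $\epsilon_0$ exists and $x$ is $I$-statistically convergent to $\xi$.

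The main obstacle, and the step that requires the most care, is the passage to the auxiliary sequence $y$: one must verify both that a single interval inside $(\xi,\xi+\epsilon_0)$ is met by only finitely many terms (this is exactly where the nowhere-density of $L_x$ enters) and that altering these finitely many terms preserves $I$-statistical pre-Cauchyness while leaving the relevant $I$-densities unchanged, so that $(**)$ may legitimately be invoked. Once this reduction is in place, the remainder is bookkeeping of the subset inclusions feeding into the dichotomy $(**)$.
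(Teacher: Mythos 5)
Your proposal is correct, and its skeleton matches the paper's: assume $x$ is not $I$-statistically convergent, split the mass away from the cluster point $\xi$ into the two half-lines, and apply the dichotomy $(**)$ to a subinterval of the gap between $\xi$ and the side carrying non-null density, obtaining a contradiction (you work above $\xi$ with $S^{+}$, the paper works below with $\left\{(j,k):x_{jk}\leq\xi-\epsilon_0\right\}$; this is immaterial). The genuine difference lies in how the interval avoided by the sequence is produced, and here your argument is more careful than the paper's. The paper reasons contrapositively: if some point of $(\xi-\epsilon_0,\xi)$ were not a limit point of $x$, it asserts one can find $(\alpha,\beta)\subset(\xi-\epsilon_0,\xi)$ with $x_{jk}\notin(\alpha,\beta)$ for \emph{all} $(j,k)$, derives a contradiction with $(**)$, concludes every point of $(\xi-\epsilon_0,\xi)$ is a limit point, and only then contradicts nowhere-density. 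Strictly speaking, a non-limit point merely yields a neighborhood containing \emph{finitely many} terms, so the paper's step needs a small repair (e.g., the finitely many values occupy finitely many points, so one can shrink the interval past them). You confront exactly this issue and resolve it by a different device: invoke nowhere-density up front to obtain an interval disjoint from the closed limit set, observe that only finitely many terms can land in a compactly contained subinterval, and then modify those finitely many terms into an auxiliary sequence $y$, verifying that a finite perturbation preserves $I$-statistical pre-Cauchyness (the count of affected pairs is at most $2|F|mn$ out of $m^2n^2$) and leaves the two relevant $I$-densities unchanged, so that $(**)$ applies to $y$. Your route is longer, but it actually closes the gap the paper glosses over; the paper's route is shorter at the cost of that unjustified (though easily repairable) step.
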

\begin{proof}
Suppose $x$ has a $I$-statistical cluster point $\xi\in\mathbb R$ . So for any  $\epsilon>0$ we have $d_I(\left\{(j,k):\left|x_{jk}-\xi\right|<\epsilon\right\})\neq 0.$ Suppose that $x$ is $I$-statistically pre-Cauchy satisfying the hypothesis $(**)$ but not $I$-statistically convergent. Then there is an $\epsilon_0>0$ such that  $d_I(\left\{(j,k):\left|x_{jk}-\xi\right|\geq\epsilon_0\right\})\neq 0.$  Without any loss of generality, we assume that $d_I(\left\{(j,k):x_{jk}\leq\xi-\epsilon_0\right\})\neq 0.$ We claim that every point of $(\xi-\epsilon_0,\xi)$ is a limit point of x. If not, then we can
find an interval $(\alpha,\beta)\subset(\xi-\epsilon_0,\xi)$ such that $x_{jk}\notin(\alpha,\beta)$ for all $(j,k)\in\mathbb{N}\times\mathbb{N}$. Thus we have $d_I(\left\{(j,k): x_{jk}\leq\alpha\right\})\neq 0$ and also since $\xi$ is a $I$-statistical cluster point we have $d_I(\left\{(j,k): x_{jk}\geq\beta\right\})\neq 0$. But
this contradicts the hypothesis $(**)$. Hence every point of $(\xi-\epsilon_0,\xi)$ is a limit point of $x$ which contradicts that the set of limit points of $x$ is a nowhere dense set. Hence $x$ is $I$-statistically convergent.
\end{proof}

 \textbf{Acknowledgment}
\\  The second author is thankful to University Grants Commission, New Delhi, India for  his Research fund.

\end{document}